\newtheorem{theorem}{Theorem}
\newtheorem{lemma}{Lemma}
\title{Two lower bounds for maximum matching}
\author{Fedor Kuyanov\footnote{feodor.kuyanov@gmail.com; HSE University, Moscow, Russian Federation}}
\date{}
\begin{document}

\maketitle

\begin{abstract}
	In this paper, we prove two lower bounds for the maximum matching size in an arbitrary undirected graph. Despite their simplicity, these results are not widely known. This article aims to bring pleasure to the reader by giving short combinatorial proofs of these easy-to-state estimates.
\end{abstract}

\section{Main results}

We focus on simple undirected graphs with $n$ vertices, $m$ edges, and having the maximum degree $d$. We investigate the following question: how small can its maximum matching be? Surprisingly, the answer to this question is hard to find in the literature. In the paper by P. Erdős and T. Gallai \cite{Erdos-Gallai}, the authors present a lower bound (Theorem 4.1); however, their proof seems quite complicated. Another related work is the article by V. Chvátal and D. Hanson \cite{Chvatal-Hanson} providing a sharp estimate for the maximum number of edges in a graph not containing a matching of a specific size. Their estimate involves three cases and is not particularly easy to state. In our work, we present two easy-to-state estimates and give short combinatorial proofs. Note that these estimates follow immediately from the much harder Vizing's theorem \cite{Vizing}. 

Let $G = (V, E)$ be a simple undirected graph (i.e. without loops and multiple edges). Denote $n := |V|$, $m := |E|$, and $d := \max_{v \in V} \deg(v)$. A subset of edges is called a \textit{matching} if no two edges have a common vertex. Let $k$ be the maximum matching size. We prove two following lower bounds:

\begin{enumerate}
	\item $k \ge m / n$ (Theorem \ref{estimate1})
	\item $k \ge 2 / 3 \cdot m / d$ (Theorem \ref{estimate2})
\end{enumerate}

Note that these results strengthen the trivial lower bounds of $m / (2n)$ and $m / (2d)$ by a constant factor. Also, the constant $2 / 3$ in the second estimate is sharp: consider a graph consisting of $r$ disjoint triangles; its maximum matching has $r = 2 / 3 \cdot 3r / 2$ edges.

Let $M$ be the set of vertices covered by the maximum matching $\pi_{max}$. For $u \in M$, denote by $d_{in}(u)$ and $d_{out}(u)$ the number of the incident edges in $G$ that connect $u$ with vertices inside and outside $M$, respectively. First, let us present the following observations.

\begin{lemma}
\label{degree-lemma}
	For each $(u, v) \in \pi_{max}$ we have
	\begin{enumerate}
		\item $d_{in}(u) + d_{out}(u) \le d$,
		\item $d_{in}(u) \le 2k$,
		\item $d_{out}(u) \le n - 2k$,
		\item either $d_{out}(u) = d_{out}(v) = 1$ or at least one of $\{d_{out}(u), d_{out}(v)\}$ is zero.
	\end{enumerate}
\end{lemma}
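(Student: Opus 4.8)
The plan is to dispatch the first three items as short counting observations and then concentrate the real effort on the fourth. For item 1, observe that $d_{in}(u) + d_{out}(u)$ counts every edge incident to $u$, so it equals $\deg(u) \le d$. For item 2, each edge counted by $d_{in}(u)$ joins $u$ to another vertex of $M$; since a matching of size $k$ covers exactly $2k$ vertices we have $|M| = 2k$, hence $d_{in}(u) \le |M| - 1 = 2k - 1 \le 2k$. For item 3, each edge counted by $d_{out}(u)$ joins $u$ to a vertex of $V \setminus M$, and $|V \setminus M| = n - 2k$, so $d_{out}(u) \le n - 2k$.

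The substantive part is item 4, which I would prove by contradiction: assume $d_{out}(u) \ge 1$ and $d_{out}(v) \ge 1$ yet $(d_{out}(u), d_{out}(v)) \ne (1,1)$. The key sub-claim is that any out-neighbor $x \notin M$ of $u$ must coincide with any out-neighbor $y \notin M$ of $v$. Indeed, if $x \ne y$, then neither $x$ nor $y$ is covered by $\pi_{max}$, so $(\pi_{max} \setminus \{(u,v)\}) \cup \{(u,x),(v,y)\}$ is again a matching --- the four vertices $u,v,x,y$ are pairwise distinct and $x,y$ lie on no other matching edge --- and it has size $k+1$, contradicting maximality. Thus $x = y$ for every such choice; fixing one out-neighbor of $v$ forces it to be the unique out-neighbor of $u$, so $d_{out}(u) = 1$, and symmetrically $d_{out}(v) = 1$, contradicting the assumption.

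The only real obstacle is in item 4: one must verify carefully that swapping $(u,v)$ for $(u,x)$ and $(v,y)$ genuinely produces a matching (this is exactly where $x, y \notin M$ and $x \ne y$ are used), and then read the quantifier correctly --- "no out-neighbor of $u$ differs from any out-neighbor of $v$" yields that each of $u$ and $v$ has \emph{exactly} one out-neighbor, not merely at most one. Items 1--3 require nothing beyond counting.
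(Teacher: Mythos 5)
Your proof is correct and follows essentially the same route as the paper: items 1--3 by direct counting, and item 4 via the augmentation that replaces $(u,v)$ by $(u,x)$ and $(v,y)$ for distinct uncovered $x,y$, contradicting maximality. The only cosmetic difference is that you phrase item 4 as a contradiction showing all out-neighbors of $u$ and $v$ coincide, whereas the paper directly picks $u' \neq v'$ when some $|N(\cdot)| \ge 2$; the content is identical.
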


\begin{proof}
	The first claim is trivial because $d_{in}(u) + d_{out}(u) = \deg(u)$. The second and third claims are also clear because the graph has no multiple edges and thus $d_{in}(u) \le |M| = 2k$ and $d_{out}(u) \le |V \setminus M| = n - 2k$. Let us prove the last claim.
	
	Consider the sets of vertices $N(u), N(v) \subset V \setminus M$ which are adjacent to $u$ and $v$, respectively. We have $d_{out}(u) = |N(u)|$ and $d_{out}(v) = |N(v)|$. If at least one of them is empty, then all is done. Otherwise, assume that $|N(v)| \ge 2$. Take $u' \in N(u)$ and $v' \in N(v)$ such that $u' \neq v'$. In this case, we can improve the matching $\pi_{max}$ by removing the edge $(u, v)$ and adding $(u, u')$ and $(v, v')$ instead. This contradicts the maximality of $\pi_{max}$.
\end{proof}

\begin{lemma}
\label{edge-lemma}
	We have $m = \sum_{(u, v) \in \pi_{max}} \left[\frac{d_{in}(u) + d_{in}(v)}{2} + d_{out}(u) + d_{out}(v)\right]$.
\end{lemma}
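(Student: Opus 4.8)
The plan is to classify the edges of $G$ according to how they meet the set $M$ and then perform a double count. First I would observe that, since $\pi_{max}$ is a \emph{maximum} matching, there is no edge of $G$ with both endpoints in $V \setminus M$: such an edge could be added to $\pi_{max}$, contradicting maximality. Hence every edge of $G$ is either \emph{internal} (both endpoints in $M$) or \emph{boundary} (exactly one endpoint in $M$, the other outside), and therefore $m$ equals the number of internal edges plus the number of boundary edges.

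Next I would count each type of edge by summing local degree contributions over the vertices of $M$. Each internal edge contributes $1$ to $d_{in}(u)$ for each of its two endpoints $u \in M$, so $\sum_{u \in M} d_{in}(u)$ equals twice the number of internal edges. Each boundary edge contributes $1$ to $d_{out}(u)$ for its unique endpoint $u \in M$, so $\sum_{u \in M} d_{out}(u)$ equals the number of boundary edges. Adding these two identities gives $\sum_{u \in M} \bigl[\tfrac{1}{2} d_{in}(u) + d_{out}(u)\bigr] = m$.

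Finally, since $\pi_{max}$ is a matching whose covered vertex set is exactly $M$, its edges partition $M$ into pairs $\{u, v\}$. Thus for any real-valued function $f$ on $M$ we have $\sum_{u \in M} f(u) = \sum_{(u,v) \in \pi_{max}} \bigl(f(u) + f(v)\bigr)$. Applying this with $f(u) = \tfrac{1}{2} d_{in}(u) + d_{out}(u)$ and combining it with the previous displayed equality yields the claimed formula.

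I do not expect a genuine obstacle here. The only step that uses anything beyond bookkeeping is the remark that no edge lies entirely outside $M$, and this is immediate from maximality of $\pi_{max}$. The single point requiring care is the factor $\tfrac{1}{2}$: it is present precisely because internal edges are seen from both endpoints in the sum over $M$, whereas boundary edges are seen from only one endpoint, so the $d_{in}$ contributions must be halved while the $d_{out}$ contributions must not.
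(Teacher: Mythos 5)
Your proof is correct and follows essentially the same route as the paper: observe that maximality of $\pi_{max}$ rules out edges entirely outside $M$, then double-count internal edges (hence the factor $\tfrac{1}{2}$ on $d_{in}$) and single-count boundary edges. The only cosmetic difference is that you sum over vertices of $M$ and then regroup into matched pairs, whereas the paper sums over the matching edges directly.
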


\begin{proof}
	Note that all edges of $G$ are either inside $M$ or go from $M$ to $V \setminus M$; there are no edges completely outside $M$ because one could add them to $\pi_{max}$. Let $m_{in}$ and $m_{out}$ be the number of those edges. By a simple counting argument,
	\begin{align*}
		2m_{in} = \sum_{(u, v) \in \pi_{max}} d_{in}(u) + d_{in}(v) \qquad\text{and}\qquad m_{out} = \sum_{(u, v) \in \pi_{max}} d_{out}(u) + d_{out}(v),
	\end{align*}
	from which the statement of the lemma follows.
\end{proof}

Now we are ready to prove two lower bounds for the size of the maximum matching.

\begin{theorem}
\label{estimate1}
	For each graph G we have $k \ge m / n$.
\end{theorem}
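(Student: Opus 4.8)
The plan is to combine Lemma~\ref{edge-lemma} with Lemma~\ref{degree-lemma} in the most direct way: write $m$ as a sum of $k$ terms, one for each edge of $\pi_{max}$, and show that every single term is at most $n$. Summing the $k$ inequalities then gives $m \le kn$, i.e. $k \ge m/n$, which is exactly the claim. (If $G$ has no edges the statement is trivial, so we may assume $k \ge 1$.)

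Concretely, fix an edge $(u,v) \in \pi_{max}$ and consider the corresponding summand
\[
S(u,v) := \frac{d_{in}(u) + d_{in}(v)}{2} + d_{out}(u) + d_{out}(v).
\]
The first thing I would record is a minor sharpening of the second claim of Lemma~\ref{degree-lemma}: since $G$ has no loops, $u$ is adjacent to at most $|M| - 1 = 2k - 1$ vertices of $M$, so in fact $d_{in}(u) \le 2k - 1$, and likewise $d_{in}(v) \le 2k - 1$; hence the first piece of $S(u,v)$ is at most $2k-1$. Then I would split into the two alternatives furnished by the fourth claim of Lemma~\ref{degree-lemma}. In the first case at least one of $d_{out}(u), d_{out}(v)$ is zero, say $d_{out}(v) = 0$; by the third claim $d_{out}(u) \le n - 2k$, so $S(u,v) \le (2k-1) + (n-2k) = n-1 \le n$. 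In the second case $d_{out}(u) = d_{out}(v) = 1$, which in particular means $V \setminus M \neq \varnothing$, hence $n = |M| + |V \setminus M| \ge 2k + 1$; therefore $S(u,v) \le (2k-1) + 1 + 1 = 2k+1 \le n$. Either way $S(u,v) \le n$, and summing over the $k$ edges of $\pi_{max}$ yields $m \le kn$ via Lemma~\ref{edge-lemma}.

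I do not anticipate a genuine obstacle, but there is one spot that needs care: the bare estimates of Lemma~\ref{degree-lemma} alone are not quite enough in the second case, where they would only give $S(u,v) \le 2k+2$. Closing that last unit requires both the sharpened bound $d_{in}(u) \le 2k-1$ (from the absence of loops) and the remark that the existence of an outside neighbour forces $n \ge 2k+1$; together these make the per-edge bound come out to exactly $n$, which is also why the inequality is tight for disjoint triangles.
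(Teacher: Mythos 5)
Your proof is correct, and it rests on the same two lemmas and the same decomposition as the paper's: write $m$ as the sum of the per-edge quantities $S(u,v)$ from Lemma~\ref{edge-lemma} and bound each by $n$ via Lemma~\ref{degree-lemma}. The one substantive difference is how the near-perfect-matching regime is handled. The paper's per-edge bound uses the crude estimates $d_{in} \le 2k$ and $d_{out}(u)+d_{out}(v) \le n-2k$, and the latter genuinely fails when $n-2k = 1$ and $d_{out}(u)=d_{out}(v)=1$; the paper therefore splits off the case $n-2k \le 1$ and settles it by the unrelated global count $m \le n(n-1)/2$. You eliminate that case split: the sharpening $d_{in}(u) \le 2k-1$ (no loops, $u$ itself lies in $M$) buys one unit, and the observation that $d_{out}(u)=d_{out}(v)=1$ forces an uncovered vertex, hence $n \ge 2k+1$, buys the other, so $S(u,v) \le n$ holds uniformly for every matching edge. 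Your version is thus a single per-edge estimate of one flavour rather than two arguments of different flavours; it is no harder, and it makes visible exactly where each unit of slack comes from, which is a modest gain in transparency over the paper's proof.
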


\begin{proof}
	We have the following two cases:
	\begin{enumerate}
		\item $n - 2k \le 1$. In this case, $k \ge (n - 1) / 2 \ge m / n$, where the last inequality follows from the estimate $m \le n(n - 1) / 2$.
		\item $n - 2k \ge 2$. In this case, for $(u, v) \in \pi_{max}$, from the last two claims of Lemma \ref{degree-lemma} it follows that $d_{out}(u) + d_{out}(v) \le n - 2k$. Substituting this inequality into Lemma \ref{edge-lemma} and applying the second claim of Lemma \ref{degree-lemma}, we get
			\begin{align*}
				m \le \sum_{(u, v) \in \pi_{max}} \left[\frac{4k}{2} + n - 2k\right] = kn,
			\end{align*}
			proving the desired estimate.
	\end{enumerate}
\end{proof}

\begin{theorem}
\label{estimate2}
	For each graph G with the maximum degree $d$ we have $k \ge 2 / 3 \cdot m / d$.
\end{theorem}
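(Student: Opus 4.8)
The plan is to argue in the same spirit as Theorem~\ref{estimate1}: start from the identity in Lemma~\ref{edge-lemma}, bound the contribution of each matching edge $(u,v) \in \pi_{max}$ to the sum by $\tfrac{3}{2}d$, and then sum over the $k$ edges of $\pi_{max}$ to obtain $m \le \tfrac{3}{2}kd$, which is exactly the claimed inequality $k \ge \tfrac{2}{3}\cdot m/d$. Before doing this I would dispose of the degenerate cases $d \le 1$: if $d \le 1$ then $G$ is a disjoint union of single edges and isolated vertices, so $m = k$ and the bound holds trivially (for $d = 0$ it reads $0 \ge 0$). So from now on assume $d \ge 2$.

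Fix an edge $(u,v) \in \pi_{max}$ and consider its summand $S := \frac{d_{in}(u) + d_{in}(v)}{2} + d_{out}(u) + d_{out}(v)$. Regrouping the terms,
\begin{align*}
	S = \frac{d_{in}(u) + d_{out}(u)}{2} + \frac{d_{in}(v) + d_{out}(v)}{2} + \frac{d_{out}(u) + d_{out}(v)}{2},
\end{align*}
and by the first claim of Lemma~\ref{degree-lemma} the first two terms are each at most $\tfrac{d}{2}$. Hence $S \le d + \tfrac{1}{2}\bigl(d_{out}(u) + d_{out}(v)\bigr)$, and it remains to show that $d_{out}(u) + d_{out}(v) \le d$.

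This is where the fourth claim of Lemma~\ref{degree-lemma} enters. Either $d_{out}(u) = d_{out}(v) = 1$, in which case $d_{out}(u) + d_{out}(v) = 2 \le d$ since $d \ge 2$; or at least one of $d_{out}(u), d_{out}(v)$ is zero, say $d_{out}(v) = 0$, in which case $d_{out}(u) + d_{out}(v) = d_{out}(u) \le \deg(u) \le d$. In both cases $S \le \tfrac{3}{2}d$, so summing the identity of Lemma~\ref{edge-lemma} over all $(u,v) \in \pi_{max}$ gives $m \le \tfrac{3}{2}kd$, as required.

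I expect the only real subtlety to be the first branch of the case analysis — when both matched endpoints have exactly one neighbour outside $M$ — because there the estimate $S \le \tfrac{3}{2}d$ is tight (this is precisely what happens in each triangle of the extremal example of $r$ disjoint triangles) and it genuinely needs $d \ge 2$; peeling off the small-degree cases at the very start is what keeps the argument clean.
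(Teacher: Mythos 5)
Your proof is correct and follows essentially the same route as the paper's: the identity of Lemma~\ref{edge-lemma}, a per-edge bound of $\tfrac{3}{2}d$ on each summand, and the case split supplied by the fourth claim of Lemma~\ref{degree-lemma}. Your regrouping of the summand, which reduces everything to the single inequality $d_{out}(u)+d_{out}(v)\le d$, is just a slightly tidier packaging of the same estimates.
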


\begin{proof}
	If $d = 1$, then $G$ is a matching and thus $k = m \ge \frac{2m}{3}$. Hereafter we assume $d \ge 2$. For each $(u, v) \in \pi_{max}$ we will show that $s(u, v) := \frac{d_{in}(u) + d_{in}(v)}{2} + d_{out}(u) + d_{out}(v) \le \frac{3}{2} d$. From the last claim of Lemma \ref{degree-lemma} we have the following two cases:
	\begin{enumerate}
		\item $d_{out}(u) = d_{out}(v) = 1$. From the first claim of Lemma \ref{degree-lemma} it follows that $d_{in}(u), d_{in}(v) \le d - 1$. Thus,
			\begin{align*}
				s(u, v) \le d + 1 \le \frac{3d}{2}.
			\end{align*}
		\item $d_{out}(v) = 0$ (wlog). Again, by the first claim of Lemma \ref{degree-lemma},
			\begin{align*}
				s(u, v) = \left(\frac{d_{in}(u)}{2} + d_{out}(u)\right) + \frac{d_{in}(v)}{2} \le (d_{in}(u) + d_{out}(u)) + d_{in}(v) / 2 \le d + d / 2 = \frac{3d}{2}.
			\end{align*}
	\end{enumerate}
	Finally, plugging this inequality into Lemma \ref{edge-lemma} we have
	\begin{align*}
		m = \sum_{(u, v) \in \pi_{max}} s(u, v) \le \frac{3kd}{2} \implies k \ge 2 / 3 \cdot m / d.
	\end{align*}
\end{proof}

\section{Acknowledgements}

This paper was prepared within the framework of the HSE University Basic Research Program. We thank Benjamin Sudakov, Mikhail Vyalyi, Vladimir Podolskii, and Stasys Jukna for useful discussions on this topic. We also thank Martin Milanič for providing the bibliographical references.

\end{document}